\newtheorem{theorem}{Theorem}[section]
\newtheorem{prop}[theorem]{Proposition}
\newtheorem{prop*}{Proposition}
\newtheorem{cor}[theorem]{Corollary}
\newtheorem{cor*}{Corollary}
\newtheorem{theorem*}{Theorem }
\theoremstyle{definition}
\newtheorem{defn}[theorem]{Definition}
\newtheorem{exmp*}{Example}
\newcommand{\Mod}{\mathrm{Mod}}
\newcommand{\Fix}{\text{Fix}}
\newcommand{\Teich}{\mathrm{Teich}}
\newcommand{\Homeo}{\mathrm{Homeo}}
\newcommand{\M}{\mathcal{M}}
\newcommand{\B}{\mathcal{B}}
\newcommand{\F}{\mathcal{F}}
\newcommand{\Z}{\mathbb{Z}}
\renewcommand{\P}{\mathcal{P}}
\renewcommand{\O}{\mathcal{O}}
\begin{document}

\title[Estimating distances between hyperbolic structures]{Estimating the distances between \\ hyperbolic structures in the moduli space}

\author{Atreyee Bhattacharya}
\address{Department of Mathematics\\
Indian Institute of Science Education and Research Bhopal\\
Bhopal Bypass Road, Bhauri \\
Bhopal 462066, Madhya Pradesh\\
India}
\email{atreyee@iiserb.ac.in}

\author{Suman Paul}
\email{smnpl2009@gmail.com}

\author{Kashyap Rajeevsarathy}
\address{Department of Mathematics\\
Indian Institute of Science Education and Research Bhopal\\
Bhopal Bypass Road, Bhauri \\
Bhopal 462066, Madhya Pradesh\\
India}
\email{kashyap@iiserb.ac.in}
\urladdr{https://home.iiserb.ac.in/$_{\widetilde{\phantom{n}}}$kashyap/}

\subjclass[2020]{Primary 57K20, Secondary 57M60}

\keywords{hyperbolic structure, pants graph, Weil-Peterssen metric}

\maketitle
\begin{abstract}
Let $\mathrm{Mod}(S_g)$ be the mapping class group of the closed orientable surface $S_g$ of genus $g\geq 2$. Given a finite subgroup $H$ of $\mathrm{Mod}(S_g)$, let $\mathrm{Fix}(H)$ be the set of all fixed points induced by the action of $H$ on the Teichm\"{u}ller space $\mathrm{Teich}(S_g)$ of $S_g$. This paper provides a method to estimate the distance between the unique fixed points of certain irreducible cyclic actions on $S_g$. We begin by deriving an explicit description of a pants decomposition of $S_g$, the length of whose curves are bounded above by the Bers' constant. To obtain the estimate, our method then uses the quasi-isometry between $\mathrm{Teich}(S_g)$ and the pants graph $\mathcal{P}(S_g)$. 
\end{abstract}
\section{Introduction}
Let $S_g$ denote the closed orientable surface of genus $g \geq 2$, $\Mod(S_g)$ the mapping class group of $S_g$, $\Teich(S_g)$ the Teichm\"{u}ller space of $S_g$, and $\M(S_g)$ the moduli space of $S_g$. Given a finite subgroup $H \leq \Mod(S_g)$, let $\Fix(H)$ denote the set of fixed points induced by the natural action of $H$ on $\Teich(S_g)$. The Nielsen realization problem asks whether $\Fix(H) \neq \emptyset$, for an arbitrary finite subgroup $H \leq \Mod(S_g)$. While S. Kerckhoff~\cite{SK1} settled this in the affirmative, W. Harvey~\cite[Theorem 2]{H1} showed that $\Fix(H) \approx \hat{i} (\Teich(S_g/H))$, where $\Teich(S_g/H)$ is defined in the sense of L. Bers~\cite{LB1,LB2}, and $\hat{i}$ is the natural embedding induced by the branched cover $S_g \to S_g/H$ (identifying $H$ with a group of self-homeomorphisms of $S_g$). However, the results of Kerkhoff and Harvey did not provide a concrete description of $\Fix(H)$. Only recently in~\cite{BPK} an explicit parametrization of $\Fix(H)$ has been obtained as a  K\"ahler submanifold of $\Teich(S_g)$ where $H$ is an arbitrary finite cyclic subgroup of $\Mod(S_g)$ and $\Teich(S_g)$ is viewed as the K\"ahler manifold equipped with the Weil-Petersson metric $\mu_{wp}$. This gives a large class of totally geodesic submanifolds of $(\Teich(S_g),\mu_{wp})$. As a natural follow up of this, in this article we provide estimates on distances between some of these submanifolds of $(\Teich(S_g),\mu_{wp})$ arising as sets of fixed points of certain finite cyclic subgroups of $\Mod(S_g)$. This is an important pursuit as although $(\Teich(S_g),\mu_{wp})$ is known to be geodesically convex, providing explicit descriptions of geodesics between hyperbolic structures on $S_g$ is extremely hard. 
 
 It is a well-known result of Brock \cite{JB} that $\Teich(S_g)$ endowed with the Weil-Petersson metric $\mu_{wp}$ is quasi-isometric to the pants graph $\P(S_g)$ of $S_g$. In this paper, we use this quasi-isometry and the theory developed in~\cite{BPK,PKS} to describe a method (see Sections \ref{Section:Admissible_Pants Decomposition} and \ref{Section:distances_Pants graph}) to estimate the distances between submanifolds of $\Teich(S_g)$ mentioned above. 

Given a simple closed curve $\alpha$ in $S_g$ and an $X \in \Teich(S_g)$, let $\ell_X(\alpha)$ denote the length of the geodesic representative of $\alpha$ under the hyperbolic metric in $X$. Bers~\cite{LB1,LB2} showed the existence of a universal constant $\B_g$ (known as the \textit{Bers' constant}) depending only on $g$ such that for any $X \in \Teich(S_g)$, there exists a pants decomposition $P_X$ of $S_g$ such that $\ell_X(\alpha) < \B_g$ for each $\alpha \in P_X$. For a pants decomposition $P$ of $S_g$, let $$V(P) = \{X \in \Teich(S_g) : \max_{\alpha \in P} \ell_X(\alpha) < \B_g \}.$$ Thus, a direct consequence of Bers' result is that the collection $\{ V(P) \} $ forms a basis for the topology on $(\Teich(S_g), \mu_{wp})$. Furthermore, given $X \in V(P_X)$ and $Y \in V(P_Y)$ the quasi-isometry from~\cite{JB} allows us to estimate $\mu_{wp}(X,Y)$ with the distance between $P_X$ and $P_Y$ in $P(S_g)$.

An $h \in \Mod(S_g)$ of order $n$ is said to be \textit{Type 1 irreducible} if its corresponding orbifold a sphere with three cone points where least one cone point is of order $n$. In~\cite{BPK,PKS}, it was shown that an irreducible Type 1 mapping class $h$ is geometrically realized as the rotation of a unique hyperbolic semi-regular polygon $X_h$ with a certain side-pairing. Moreover, an inductive procedure was described to build an arbitrary periodic mapping class from irreducible Type 1 components through certain processes called \textit{compatibilities}. Consequently, the hyperbolic structures realizing arbitrary periodic maps on $S_g$ can also be constructed inductively using the structures of type $X_h$. In this paper, the first main result describes an explicit pants decomposition $P$ of $S_g$ such that $P = P_{X_h}$ (see Theorem~\ref{thm:admiss_pants_decomp}). The second main result of this article provides an estimate for the distance between sets of fixed points of two Type 1 irreducible actions as follows.
\begin{theorem*} \label{intro:main1}
Let $H_1=\langle h_1 \rangle$ and $H_2=\langle h_2\rangle$ be finite cyclic subgroups of $\Mod(S_g)$ where $h_1$ and $h_2$ are irreducible type 1 mapping classes. Then
$$d_{wp}(\Fix(H_1), \Fix(H_2))\leq K\mathbb{D} (h_1,h_2)+\epsilon,$$
where $K$ and $\epsilon$ are the quasi-isometric constants from Brock's result (see \cite{JB}), $d_{WP}$ is the distance function induced by the Weil-Petersson metric $\mu_{wp}$, and $\mathbb{D}(h_1,h_2)$ is a global constant completely determined by the polygons $X_{h_1}$ and $X_{h_2}$.
\end{theorem*}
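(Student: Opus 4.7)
The plan is to reduce the theorem to a combinatorial estimate in the pants graph and then transfer back via Brock's quasi-isometry. The first observation is that, since $h_1$ and $h_2$ are Type 1 irreducible, each quotient orbifold $S_g/H_i$ is a sphere with three cone points. By Harvey's theorem recalled in the introduction, $\Fix(H_i) \approx \hat{i}(\Teich(S_g/H_i))$, and the Teichm\"uller space of a sphere with three cone points is a single point; hence $\Fix(H_i) = \{X_{h_i}\}$ is exactly the unique hyperbolic structure realized as the rotation of the semi-regular polygon associated to $h_i$. The desired inequality therefore reduces to bounding $d_{wp}(X_{h_1}, X_{h_2})$.

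Next, Theorem~\ref{thm:admiss_pants_decomp} supplies an explicit pants decomposition $P_i = P_{X_{h_i}}$ with $\ell_{X_{h_i}}(\alpha) < \B_g$ for every $\alpha \in P_i$, so that $X_{h_i} \in V(P_i)$. As noted in the introduction, Brock's quasi-isometry between $(\Teich(S_g), \mu_{wp})$ and $\P(S_g)$ lets one estimate $\mu_{wp}(X,Y)$ for $X \in V(P_X)$ and $Y \in V(P_Y)$ by the combinatorial distance between $P_X$ and $P_Y$ in $\P(S_g)$, which gives
$$d_{wp}(X_{h_1}, X_{h_2}) \leq K \cdot d_{\P(S_g)}(P_1, P_2) + \epsilon.$$
The remaining task is to define $\mathbb{D}(h_1, h_2)$ as an upper bound for $d_{\P(S_g)}(P_1, P_2)$ depending only on the polygons. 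Because the curves of $P_i$ are read directly off the side-pairing structure of $X_{h_i}$, one can exhibit an explicit sequence of elementary pants moves (each replacing a single curve by one meeting it minimally inside the complementary subsurface) carrying $P_1$ to $P_2$, and counting these moves in terms of the polygon combinatorics yields $\mathbb{D}(h_1,h_2)$.

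The main obstacle lies in this last step: producing a bound that is \emph{uniform} in the sense of depending only on $X_{h_1}$ and $X_{h_2}$ rather than on auxiliary choices made during the construction of the path in $\P(S_g)$. This requires a systematic comparison of the two pants decompositions, identifying at each stage which curves of $P_1$ must be flipped and in what order to arrive at curves of $P_2$, while verifying that every intermediate collection remains a pants decomposition. The explicit description of $P_{X_h}$ from Theorem~\ref{thm:admiss_pants_decomp}, together with the inductive construction of periodic hyperbolic structures from Type 1 irreducible components developed in~\cite{BPK,PKS}, should supply the combinatorial framework needed to organize this bookkeeping and produce the claimed global constant $\mathbb{D}(h_1,h_2)$.
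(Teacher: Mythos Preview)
Your overall architecture is the same as the paper's: reduce to the single points $X_{h_i}$, invoke the admissible pants decompositions $P_i$ from Theorem~\ref{thm:admiss_pants_decomp}, apply Brock's quasi-isometry to get $d_{wp}(X_{h_1},X_{h_2})\le K\,d_{\P(S_g)}(P_1,P_2)+\epsilon$, and then bound the pants-graph distance combinatorially. That part is fine and matches the paper exactly.

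The gap is precisely the one you flag yourself. You do not \emph{define} $\mathbb{D}(h_1,h_2)$; you only assert that ``counting these moves in terms of the polygon combinatorics'' should produce such a constant, and you leave the bookkeeping as an obstacle. The paper's contribution is exactly this missing piece, and it is not done by exhibiting a raw sequence of elementary moves as you suggest. Instead, the paper introduces an intermediate combinatorial model: each admissible pants decomposition coming from $\P_{F_i}$ is encoded as a \emph{canonical $2g$-tuple}, namely a surjection $f_i:\{1,\dots,2g\}\to\{1,\dots,g\}$ with all fibers of size two, well defined up to an explicit equivalence (Proposition~\ref{pants algorithm}). The quantity $\mathbb{D}$ is then defined as the word length, in the adjacent transpositions $\phi_i=(i\;\,i{+}1)$ of $\Sigma_{2g}$, of the permutation $\sigma(f_1,f_2)$ relating the two tuples, minimized over the equivalence classes. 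The key combinatorial claim is that each adjacent transposition corresponds to (at most) one elementary move in $\P(S_g)$, giving $d_{\P(S_g)}(P_{f_1},P_{f_2})\le \mathbb{D}([f_1],[f_2])$; chaining this with Brock yields the theorem.

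So your outline is correct but incomplete: without the canonical-tuple encoding and the transposition/word-length definition of $\mathbb{D}$, you have not produced a quantity ``completely determined by the polygons $X_{h_1}$ and $X_{h_2}$,'' and the proof does not close.
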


\section{Preliminaries}
\subsection{Periodic mapping classes}\label{sec:finite_maps}
For $g \geq 1$, let $F \in \Mod(S_g)$ be of order $n$. By the Nielsen-Kerckhoff theorem~\cite{SK,JN},  $F$ is represented by a \textit{standard representative} $\F \in \Homeo^+(S_g)$ of order $n$. Let $\O_F := S_g/\langle \F \rangle$ be the \textit{corresponding orbifold} of $F$ of genus $g_0$ (say). Each cone point $x_i \in \O_F$ lifts under the branched cover $S_g \to S_g/\langle \F \rangle$ to an orbit of size $n/n_i$ on $S_g$, where the local rotation induced by $\F$ is given by $2 \pi c_i^{-1}/n_i$,  $c_i c_i^{-1} \equiv 1 \pmod{n_i}$. The tuple $\Gamma(\O_F) := (g_0; n_1,\ldots,n_{\ell})$, is called the \textit{signature of} $\O_F$. By the theory of group actions on surfaces (see~\cite{H1} and the references therein), we obtain an exact sequence: 
\begin{equation*}
\label{eq:surf_kern}
1 \rightarrow \pi_1(S_g) \rightarrow \pi_1^{orb}(\O_F) \xrightarrow{\Phi_F} \langle \F \rangle \rightarrow 1, 
\end{equation*}
\noindent where the orbifold fundamental group $\pi_1^{orb}(\O_F)$ of $\O_F$ has the presentation
\begin{equation*}
\label{eqn:orb-pres}
\left\langle \alpha_1,\beta_1,\dots,\alpha_{g_0},\beta_{g_0}, \xi_1,\dots,\xi_{\ell} \, |\, \xi_1^{n_1},\dots,\xi_\ell^{n_{\ell}},\,\prod_{j=1}^{\ell} \xi_j \prod_{i=1}^{g_0}[\alpha_i,\beta_i]\right\rangle
\end{equation*} 
and $\Phi_{F} (\xi _i) = \F^{(n/n_i)c_i}$, for $1 \leq i \leq \ell$. We will now define a tuple of integers that will encode the conjugacy class of a periodic mapping class $F \in \Mod(S_g)$ of order $n$ in $\Mod(S_g)$.  

\begin{defn}\label{defn:data_set}
A \textit{data set of degree $n$} is a tuple
$$
D = (n,g_0, r; (c_1,n_1),\ldots, (c_{\ell},n_{\ell})),
$$
where $n\geq 2$, $g_0 \geq 0$, and $0 \leq r \leq n-1$ are integers, and each 
$c_i \in \Z_{n_i}^\times$ such that:
\begin{enumerate}[(i)]
\item $r > 0$ if and only if $\ell = 0$ and $\gcd(r,n) = 1$, whenever $r >0$,
% check above change.... 
\item each $n_i\mid n$,
%\item for each $i$, $\gcd(c_i,n_i) = 1$, 
\item $\text{lcm}(n_1,\ldots \widehat{n_i}, \ldots,n_{\ell}) = N$, for $1 \leq i 
\leq \ell$, where $N = n$, if $g_0 = 0$,  and
\item $\displaystyle \sum_{j=1}^{\ell} \frac{n}{n_j}c_j \equiv 0\pmod{n}$.
\end{enumerate}
The number $g$ determined by the Riemann-Hurwitz equation
\[\frac{2-2g}{n} = 2-2g_0 + \sum_{j=1}^{\ell} \left(\frac{1}{n_j} - 1 \right) \tag{R-H} \]
is called the {genus} of the data set, denoted by $g(D)$.
\end{defn}

\noindent The following proposition, which allows us to use data sets to represent the conjugacy classes of cyclic actions on $S_g$, follows mainly from a result of Nielsen~\cite{JN1}.

\begin{prop}\label{prop:ds-action}
For $g \geq 1$ and $n \geq 2$, data sets of degree $n$ and genus $g$ correspond to conjugacy classes of $\Z_n$-actions on $S_g$. 
\end{prop}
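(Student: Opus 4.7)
The plan is to exhibit explicit maps in both directions between data sets of degree $n$ and genus $g$ and conjugacy classes of $\Z_n$-actions on $S_g$, and then verify that these maps descend to mutual inverses on conjugacy classes.

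From action to data set: given a $\Z_n$-action on $S_g$ generated by $F \in \Mod(S_g)$, the Nielsen-Kerckhoff theorem provides a standard representative $\F$ of order $n$. The corresponding orbifold $\O_F = S_g/\langle \F\rangle$ has signature $(g_0; n_1, \ldots, n_\ell)$, and the local rotations at the cone points are encoded by $c_i \in \Z_{n_i}^\times$ exactly as recalled in Section~\ref{sec:finite_maps}. I would associate to $F$ the tuple $(n, g_0, r; (c_1, n_1), \ldots, (c_\ell, n_\ell))$, where $r = 0$ unless $\ell = 0$, in which case $r$ records the rotation angle of the free action. Conditions (i)--(iv) of Definition~\ref{defn:data_set} are then routine: (ii) holds because each cone-point stabilizer is a cyclic subgroup of $\Z_n$; (iii) expresses that the images $\Phi_F(\xi_i)$ generate $\langle \F\rangle$; (iv) is the image under $\Phi_F$ of the long surface relator in the presentation of $\pi_1^{orb}(\O_F)$; and the Riemann-Hurwitz identity is automatic from the branched cover $S_g \to \O_F$.

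From data set to action: given a data set $D$, I would build the abstract orbifold $\O$ of signature $(g_0; n_1, \ldots, n_\ell)$ with the standard presentation of $\pi_1^{orb}(\O)$, and define $\Phi : \pi_1^{orb}(\O) \to \Z_n$ by sending $\xi_i$ to $(n/n_i)c_i$ and the hyperbolic generators $\alpha_i, \beta_i$ so that the image is all of $\Z_n$ (condition (iii) makes this possible). Condition (iv) ensures $\Phi$ vanishes on the long relator, so $\Phi$ is well-defined; the assumption $c_i \in \Z_{n_i}^\times$ forces $\Phi(\xi_i)$ to have order exactly $n_i$, from which classical orbifold covering theory yields that $\ker \Phi$ is torsion-free. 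The kernel is therefore a surface group, whose underlying surface is forced by Riemann-Hurwitz to be $S_g$, and the associated $n$-fold branched cover gives the desired $\Z_n$-action.

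The main obstacle is showing that this correspondence descends to conjugacy classes on both sides. Two standard representatives of conjugate mapping classes produce orbifold data that agree up to relabeling of cone points, so the associated data set is well-defined. Conversely, two homomorphisms realizing the same data set differ by pre-composition with an automorphism of $\pi_1^{orb}(\O)$ induced by a self-homeomorphism of $\O$, and lifting such a homeomorphism to $S_g$ conjugates the resulting standard representatives in $\Mod(S_g)$. The essential external input at this step is Nielsen's classification~\cite{JN1}, which asserts that the conjugacy class of a periodic self-homeomorphism of a closed surface is completely determined by the signature of the quotient orbifold together with the local rotation data---precisely what the data set records.
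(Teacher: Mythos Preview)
Your proposal is correct and in fact more detailed than what the paper offers: the paper does not give a proof of this proposition at all, stating only that it ``follows mainly from a result of Nielsen~\cite{JN1}.'' Your sketch spells out the two directions of the correspondence and identifies Nielsen's classification as the crucial input for the conjugacy step, which is precisely the citation the paper gives in lieu of a proof.
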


\noindent We will denote the data set encoding the conjugacy class of a periodic mapping class $F$ by $D_F$. The parameter $r$ (in Definition~\ref{defn:data_set}) will come into play in a data set $D_F$ only when $\F$ is a free rotation of $S_g$ by $2\pi r/n$, in which case, $D_F$ will take the form $(n,g_0,r;)$. We will avoid including $r$ in the notation of a data set $D_F$, whenever $\F$ is non-free.  Furthermore, for compactness of notation, we also write a data set $D$ (as in Definition~\ref{defn:data_set}) as
$$D = (n,g_0,r; ((d_1,m_1),\alpha_1),\ldots,((d_{\ell'},m_{\ell'}),\alpha_{\ell'})),$$
where $(d_i,m_i)$ are the distinct pairs in the multiset $S = \{(c_1,n_1),\ldots,(c_{\ell},n_{\ell})\}$, and the $\alpha_i$ denote the multiplicity of the pair $(d_i,m_i)$ in the multiset $S = \{(c_1,n_1),\ldots,(c_{\ell},n_{\ell})\}$. For simplicity of notation, we shall avoid writing the parameters $\alpha_i$ when they equal $1$. 

Let $F \in \Mod(S_g)$ be of order $n$. Then $F$ is said to be \textit{rotational} if $\F$ is a rotation of the $S_g$ through an axis by $2 \pi r/n$, where $\gcd(r,n)=1$. It is apparent that $\F$ is either has no fixed points, or $2k$ fixed points which are induced at the points of intersection of the axis of rotation with $S_g$. Moreover, these fixed points will form $k$ pairs of points $(x_i,x_i')$, for $1 \leq i \leq k$, such that the sum of the angles of rotation induced by $\F$ around $x_i$ and  $x_i'$ add up to $0$ modulo $2\pi$. Consequently, we have the following: 
\begin{prop}
\label{prop:rotl_actn}
Let $F \in \Mod(S_g)$ be a rotational mapping class of order $n$. 
\begin{enumerate}[(i)]
\item When $\F$ is a non-free rotation, then $D_F$ has the form
 $$(n,g_0;\underbrace{(s,n),(n-s,n),\ldots,(s,n),(n-s,n)}_{k \,pairs}),$$ for integers $k \geq 1$ and $0<s\leq n-1$ with $\gcd(s,n)= 1$, and $k=1$ if $n>2$.  
 \item When $\F$ is a free rotation, then $D_F$ has the form
 $$(n,\frac{g-1}{n}+1,r;).$$
\end{enumerate}
\end{prop}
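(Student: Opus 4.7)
The plan is to read off the data set directly from the geometry of $\F$: examine its action on the tangent spaces at the axis fixed points and translate the resulting local rotation numbers into the data-set pairs $(c_i,n_i)$ using the convention $2\pi c_i^{-1}/n_i$ from Section~\ref{sec:finite_maps}.

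For part (i), the standard representative $\F$ is a rotation of $S_g\subset\mathbb{R}^3$ by $2\pi r/n$ about the axis $L$, meeting $S_g$ transversely in $2k$ points. At any such fixed point $x$, the tangent plane $T_xS_g$ must be perpendicular to $L$, since it is the only $d\F|_x$-invariant $2$-plane in $T_x\mathbb{R}^3$ when $n\geq 3$, and the same follows from transversality when $n=2$. Hence $d\F|_x$ acts on $T_xS_g$ as a rotation by $\pm 2\pi r/n$, with the sign depending on whether the outward normal at $x$ points along $L$ or against it. Each fixed point contributes a cone point of order $n$ to $\O_F$; setting $s\equiv r^{-1}\pmod{n}$, the $k$ ``upward-facing'' points produce $k$ copies of $(s,n)$ and the $k$ ``downward-facing'' ones produce $k$ copies of $(n-s,n)$. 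The pairing $(x_i,x'_i)$ from the preamble matches them as displayed, condition~(iv) of Definition~\ref{defn:data_set} holds automatically since each pair contributes $s+(n-s)\equiv 0\pmod{n}$, and $g_0$ is then forced by the Riemann--Hurwitz equation.

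The restriction $k=1$ when $n>2$ is the one genuinely topological step. The plan is to show that an order-$n$ rotation of $S_g\subset\mathbb{R}^3$ with $n\geq 3$ meets the surface in at most two axis points, by studying the $\Z_n$-invariant decomposition of $S_g$ determined by slicing along the axis: each interval of $L$ between consecutive axis fixed points should bound a $\Z_n$-invariant region of $S_g$ whose $n$-fold rotational symmetry, together with the capping by two axis fixed points, should be topologically incompatible with more than one such region existing when $n\geq 3$.

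For part (ii), $\F$ acts freely, so $\O_F$ is closed with no cone points. Riemann--Hurwitz then reduces to $(2-2g)/n=2-2g_0$, giving $g_0=(g-1)/n+1$. Since $\F$ is a free rotation by $2\pi r/n$ with $\gcd(r,n)=1$, the parameter $r$ coincides with the free-rotation entry of Definition~\ref{defn:data_set}, yielding $D_F=(n,(g-1)/n+1,r;)$. All of the work outside the $k=1$ restriction is a routine translation between the geometric action of $\F$ and the data-set notation, and I expect the $k=1$ step to be the sole genuine obstacle in the whole proof.
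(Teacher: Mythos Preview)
The paper does not actually prove this proposition: it is asserted immediately after the sentence ``Consequently, we have the following,'' as a direct reading of the preceding paragraph about the fixed points of $\F$ occurring in pairs whose local rotation angles sum to $0$ modulo $2\pi$. Your treatment of the local rotation data at the axis points, the translation into the pairs $(s,n),(n-s,n)$ via the convention of Section~\ref{sec:finite_maps}, and the Riemann--Hurwitz computation in the free case are all correct and already more detailed than anything the paper offers.

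You are right to isolate the clause ``$k=1$ if $n>2$'' as the only substantive step, but your sketched plan is too vague to assess, and in fact the claim does not seem to follow from the definition of ``rotational'' actually stated in the paper. Embed $S_2$ in $\mathbb{R}^3$ as two round spheres centred on a common axis $L$, each with three small disks removed in a $\Z_3$-symmetric pattern away from the axis, and join corresponding boundary circles by three tubes permuted cyclically by the rotation. This is a genus-$2$ surface carrying an order-$3$ rotation about $L$, and $L$ meets it transversely in four points, so $k=2$ with $n=3>2$; the associated tuple $(3,0;(1,3),(2,3),(1,3),(2,3))$ satisfies every condition of Definition~\ref{defn:data_set} and the Riemann--Hurwitz relation with $g_0=0$. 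Hence either the proposition is implicitly relying on a narrower notion of ``rotational'' imported from~\cite{BPK,PKS} that is not spelled out in this paper, or the clause is simply imprecise. Either way, no argument of the kind you outline will establish the restriction from the stated definition, and the remainder of your proof already matches (and exceeds in detail) what the paper provides.
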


\noindent We say $F$ is of \textit{Type 1} if $\Gamma(\O_F)$ has the form $(g_0; n_1,n_2,n)$, and $F$ is said to be of \textit{Type 2} if $F$ is neither rotational nor of Type 1. Gilman~\cite{G1} showed that a periodic mapping class $F \in \Mod(S_g)$ is irreducible if and only if $\O_F$ is a sphere with three cone points. Thus, $F$ is an irreducible Type 1 mapping class if and only if $\Gamma(\O_F)$ has the form $(0; n_1,n_2,n)$.

\subsection{Decomposing periodic maps into irreducibles} 
\label{sec:pri_into_irred}
In ~\cite{BPK,PKS}, a method was described to construct an arbitrary non-rotational periodic element $F \in \Mod(S_g)$, for $g \geq 2$, by performing certain``compatibilties" on irreducible Type 1 actions. These Type 1 actions are in turn realized as rotations of certain unique hyperbolic polygons with side-pairings. 
\begin{theorem}[{\cite[Theorem 2.7]{PKS}}]
\label{res:1}
For $g \geq 2$, consider an irreducible Type 1 action $F \in {\Mod}(S_g)$ with $$D_F = (n,0; (c_1,n_1),\linebreak (c_2,n_2), (c_3,n)).$$ Then $F$ can be realized explicitly as the rotation $\theta_F= 2\pi c_3^{-1}/n$ of a hyperbolic polygon $\P_F$ with a suitable side-pairing $W(\P_F)$, where $\P_F$ is a hyperbolic  $k(F)$-gon with
$$ k(F) := \begin{cases}
2n, & \text { if } n_1,n_2 \neq 2, \text{ and } \\
n, & \text{otherwise, }
\end{cases}$$
and for $0 \leq m\leq n-1$, 
$$ 
W(\P_F) =
\begin{cases}
\displaystyle  
  \prod_{i=1}^{n} a_{2i-1} a_{2i} \text{ with } a_{2m+1}^{-1}\sim a_{2z}, & \text{if } k(F) = 2n, \text{ and } \\
\displaystyle
 \prod_{i=1}^{n} a_{i} \text{ with } a_{m+1}^{-1}\sim a_{z}, & \text{otherwise,}
\end{cases}$$
where $\displaystyle z \equiv m+qj \pmod{n}$ with $q= (n/n_2)c_3^{-1}$ and $j=n_{2}-c_{2}$.
\end{theorem}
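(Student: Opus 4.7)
The plan is to start from the orbifold description of $F$: since $F$ is irreducible Type 1 with signature $(0;n_1,n_2,n)$, its standard representative $\F$ acts on $S_g$ with a unique fixed point $p$, namely the preimage of the cone point of order $n$, together with one orbit of size $n/n_1$ and one orbit of size $n/n_2$ covering the other two cone points. The goal is to produce a geodesic fundamental polygon $\P_F$ centred at $p$ whose $n$-fold rotational symmetry realises $\F$ as the rotation by $\theta_F = 2\pi c_3^{-1}/n$ prescribed by $\Phi_F(\xi_3) = \F^{c_3}$.

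To construct $\P_F$, I would view $\O_F$ as the quotient of $\mathbb{H}^2$ by the $(n_1,n_2,n)$-triangle group, so that a fundamental triangle has angles $\pi/n_1$, $\pi/n_2$, $\pi/n$. Since the index $[\pi_1^{orb}(\O_F) : \pi_1(S_g)] = 2n$, tiling $\mathbb{H}^2$ around the lift $p$ of the vertex of angle $\pi/n$ by $2n$ copies of this triangle produces a hyperbolic polygon centred at $p$ whose outer boundary alternates between vertices of the two remaining types. In the generic case this yields a $2n$-gon; when $n_2=2$ (respectively $n_1=2$) the interior angle at that type of vertex equals $\pi$, so consecutive edges become collinear and fuse into single edges, dropping the count to $n$ and accounting for the case split in $k(F)$. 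A Gauss--Bonnet check on the resulting polygon, combined with Riemann--Hurwitz, would confirm that it glues up to $S_g$ of the correct genus.

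For the side pairings, the key input is the orbifold relation $\xi_1\xi_2\xi_3=1$ together with $\Phi_F(\xi_i)=\F^{(n/n_i)c_i}$. Reading $\xi_2^{-1}=\xi_3\xi_1$ through $\Phi_F$ gives $\F^{(n/n_2)(n_2-c_2)}$, so the local monodromy about a polygon vertex lifting the cone point of order $n_2$ must identify the edge approaching that vertex with the edge leaving it after rotating by $\F^{(n/n_2)j}$, where $j=n_2-c_2$. Re-expressing this power of $\F$ in units of the fundamental sector angle $\theta_F$ converts the exponent to a shift of $qj$ sectors with $q=(n/n_2)c_3^{-1}$; this is precisely the pairing $a_{m+1}^{-1}\sim a_z$ with $z\equiv m+qj\pmod{n}$, doubled in the $2n$-gon case because each sector then consists of two edges instead of one. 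Applying the rotation $\theta_F$ to this one identification $n$ times then assembles the full side-pairing word $W(\P_F)$.

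The main technical obstacle is the bookkeeping in this last step: one must simultaneously track the cyclic edge labelling, the residues modulo $n$ coming from the rotation, and the exponents in $\Z_{n_2}$ coming from monodromy at the $n_2$-type vertex, and then verify that the resulting word actually defines a surface of genus $g$. A natural consistency check along the way is that the product of all side-pairing identifications should collapse, after accounting for the vertex cycles, to the standard surface relator $\prod[\alpha_i,\beta_i]$; this ultimately forces the compatibility condition $\sum (n/n_j)c_j\equiv 0\pmod{n}$ from Definition~\ref{defn:data_set}, providing an end-to-end sanity check on the closed formula for $W(\P_F)$.
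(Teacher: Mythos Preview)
The paper does not contain a proof of this statement: Theorem~\ref{res:1} is quoted verbatim from \cite[Theorem 2.7]{PKS} as a preliminary result, and no argument for it appears anywhere in the present paper. There is therefore nothing here to compare your proposal against.

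That said, your outline is a plausible reconstruction of how such a result is typically established (and is broadly in the spirit of the arguments in the cited source). One small slip: from the exact sequence $1\to\pi_1(S_g)\to\pi_1^{orb}(\O_F)\to\langle\F\rangle\to 1$ the index $[\pi_1^{orb}(\O_F):\pi_1(S_g)]$ equals $n$, not $2n$. What you presumably mean is that the fundamental triangle for the \emph{full} $(n_1,n_2,n)$-triangle group (including reflections) has index $2$ in a fundamental domain for $\pi_1^{orb}(\O_F)$, so that $2n$ copies of that single triangle tile a fundamental domain for $\pi_1(S_g)$; once stated this way your vertex-angle and edge-collapse reasoning for the $k(F)$ dichotomy goes through. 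The derivation of the side-pairing from the monodromy data is the genuinely delicate step, and your sketch correctly isolates the shift $qj$ with $q=(n/n_2)c_3^{-1}$ and $j=n_2-c_2$ as coming from rewriting $\Phi_F(\xi_2^{-1})$ in units of the basic rotation $\theta_F$; turning that into a complete proof still requires a careful vertex-cycle analysis to confirm that the quotient has the right genus, which you acknowledge but do not carry out.
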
 
\subsection{The Teichm\"uller space of $S_g$}
Let $\text{HypMet}(S_g)$ denote the set of all hyperbolic metrics on $S_g$ and $\text{Diff}_0(S_g)$ denote the group of all diffeomorphisms of $S_g$ isotopic to identity. The Teichm\"uller space ($\Teich(S_g)$) of $S_g$ is the quotient space 
$$\Teich(S_g) = \text{HypMet}(S_g)/\text{Diff}_0(S_g)$$
where $\text{Diff}_0(S_g)$ ($\text{Diffeo}^{+}(S_g)$) acts on $\text{HypMet}(S_g)$ via \textit{pullback} i.e. 
$$f \cdot \xi = f^{*}(\xi) \ \forall f \in \text{Diff}_0(S_g) \ \text{ and } \xi \in \text{HypMet}(S_g).$$ 
The above action induces a natural action of $\Mod(S_g)$ on $\Teich(S_g)$ as follows:
Given $F = [f]\in \Mod(S_g)$ and  $[\xi]\in \Teich(S_g)$, 
$$F\cdot [\xi] = [f^{*}(\xi)].$$
\subsection{The pants graph}
A pants decomposition of $S_g$ ($g\geq 2$) is a maximal multicurve comprising $3g-3$ distinct isotopy classes of pairwise disjoint, essential, simple closed curves on $S_g$. Two given distinct pants decompositions $P_1$ and $P_2$ of $S_g$ are said to be related by an elementary move provided $P_2$ can be obtained from $P_1$ by replacing a curve $\alpha \in P_1$  by a curve $\beta$ that  intersects $\alpha$ minimally. Considering each pants decomposition as a vertex and joining two pants decompositions by an edge if they differ by an elementary move, one obtains the pants graph $\P(S_g)$ of $S_g$. Moreover, $\P(S_g)$ is a metric space by assigning length $1$ to each of its edges. Brock proved that  $(\Teich(S_g), \mu_{wp})$ is quasi-isometric to the pants graph $\P(S_g)$ equipped with the aforementioned metric. This enables us to study the coarse geometric aspects of the Weil-Petersson metric via the combinatorial description of $\P(S_g)$. The quasi-isometry can be described as follows:  Given a hyperbolic metric $X\in \Teich(S_g)$ and a simple closed curve $\alpha$ on $S_g$, let $\ell_X(\alpha)$ denote the length of the geodesic representative of $\alpha$ with respect to the hyperbolic metric $X$. By a result of Bers (see \cite{LB1,LB2}), it follows that there is a universal constant $\B_g$ (known as the Bers’ constant) depending only on $g$ such that for each $X \in\Teich(S_g)$, there exists a pants decomposition $P_X$ of $S_g$ such that $\ell_X(\alpha) < \B_g$ for each $\alpha \in P_X$.  

\section{Finding an Admissible Pants Decomposition for $P_{\kappa(D)}$}\label{Section:Admissible_Pants Decomposition}
\subsection{Admissible Pants Decomposition}
%A \textit{pants decomposition} of $S_g$, is a maximal collection of distinct non-trivial simple closed curves on $S_g$ which have pairwise intersection number zero. A pants decomposition always has $3g-3$ many curves and upon cutting the surface along those curves, $S_g$ breaks into $2g-2$ many \textit{pairs of pants}.
For $g \geq 2$, let $F \in {\Mod}(S_g)$ be an irreducible Type 1 action with 
$$D_F = (n,0; (c_1,n_1),\linebreak (c_2,n_2), (c_3,n))$$ 
realized explicitly as the rotation $\theta_F= 2\pi c_3^{-1}/n$ of a unique hyperbolic $k(F)$-gon $\P_F$ as mentioned in Theorem \ref{res:1}. In this section, we develop an algorithm in order to construct a special pants decomposition of $S_g$ that is \textit{admissible} for the hyperbolic structure $\P_{F}$ in the following sense.

Here are some remarks about the presentation of polygon $\mathcal{P}_F$, when $k(F)=2n$. This technical aspect will be used later.
\begin{defn}
A pants decomposition $\mathcal{P}= \{\alpha_1,\alpha_2,\cdots,\alpha_{3g-3}\}$ of $S_g$ is said to be \textit{admissible for the hyperbolic structure} $\P_{F}$ if $\ell(\alpha_j)\leq \B_g$ for all $1\leq j \leq 3g-3$ where $\ell(\alpha_j)$ denotes the length of $\alpha_j$ with respect to $\P_{F}$ and $\B_g$ denotes the Bers' constant corresponding to genus $g$.       
\end{defn}

Let $\iota(a_k)$ and $t(a_k)$ denote the initial and terminal vertices of the edge $a_k$ in the hyperbolic polygon $\P_{F}$ described above. Let $\gamma_i$ denote the geodesic joining vertices $\iota(a_i)$ and $\iota(a_{2n+1-i})$, and $\tilde{\gamma}_i$ the geodesic joining the mid-points of $a_i$ and $a_{2n+1-i}$. We consider the multicurve $\mathcal{C}$ given by 
$$\mathcal{C}=\{\gamma_i\}_i\cup \{\tilde{\gamma}_i\}_i\cup \{\gamma_i\ast\gamma_j\}_{i,j}\cup \{\tilde{\gamma}_i\ast \tilde{\gamma}_j\}_{i,j}$$
as shown in Figure~\ref{fig:poly_part} below. 

\begin{figure}[htbp]
\centering
\labellist
	\small
	\pinlabel $a_1$ at 50 4
	\pinlabel $a_2$ at 85 18
	\pinlabel $a_3$ at 98 52
	\pinlabel $a_4$ at 84 87
     \pinlabel $a_1$ at 50 102
     \pinlabel $a_2$ at 14 87
     \pinlabel $a_4$ at 15 18
     \pinlabel $a_3$ at 1 52
     \pinlabel $a_1$ at 158 2
     \pinlabel $a_1$ at 156 103
      \pinlabel $a_2$ at 190 13
      \pinlabel $a_2$ at 125 93
      \pinlabel $a_3$ at 206 37
      \pinlabel $a_3$ at 108 67
       \pinlabel $a_4$ at 110 37
       \pinlabel $a_4$ at 205 67
      \pinlabel $a_5$ at 185 93
       \pinlabel $a_5$ at 127 13
       \pinlabel $R_1$ at 190 29
       \pinlabel $R_2$ at 175 40
       \pinlabel $R_3$ at 157 54
        \pinlabel $R_4$ at 140 65
         \pinlabel $R_5$ at 124 78
          \pinlabel $R_1$ at 85 38
         \pinlabel $R_2$ at 64 48
         \pinlabel $R_3$ at 37 59
         \pinlabel $R_4$ at 14 68
\endlabellist
\includegraphics[width=55ex]{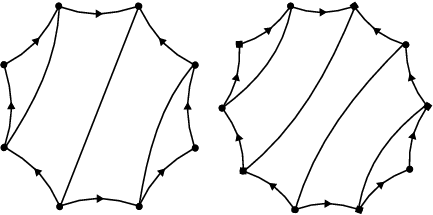}
\caption{Partitioning the hyperbolic structures on $S_2$ realizing the irreducible Type 1 actions of orders $8$ and $10$.}
\label{fig:poly_part}
\end{figure}

Note that, each $\gamma_i$ (or $\tilde{\gamma}_i$) in the above list need not be a closed curve. However, we observe the following. 
\begin{prop}\label{prop:pants}
A subcollection of $\mathcal{C}$ consisting of only simply closed curves forms a pants decomposition of $S_g$.
\end{prop}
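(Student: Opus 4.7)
The plan is to exhibit an explicit subcollection $\mathcal{C}' \subset \mathcal{C}$ consisting only of simple closed curves on $S_g$, and then to verify the three defining properties of a pants decomposition: the curves in $\mathcal{C}'$ are (i) pairwise disjoint, (ii) each essential, and (iii) exactly $3g-3$ in number.

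The first task is to determine which arcs from $\{\gamma_i\}$ and $\{\tilde{\gamma}_i\}$ close up into simple closed curves on $S_g$ on their own, and which close up only after a concatenation of the form $\gamma_i*\gamma_j$ or $\tilde{\gamma}_i*\tilde{\gamma}_j$. Using the explicit side-pairing $a_{m+1}^{-1}\sim a_{z}$ with $z\equiv m+qj \pmod n$ supplied by Theorem~\ref{res:1}, I would trace the identification orbit of the two endpoints of each $\gamma_i$ (respectively, of the midpoints involved in each $\tilde{\gamma}_i$). An arc closes up on its own precisely when its two endpoints lie in a common two-element orbit under the side-pairing; otherwise, I pick the unique $\gamma_j$ (respectively $\tilde{\gamma}_j$) sharing an endpoint-orbit with $\gamma_i$, and take the concatenation. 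The subcollection $\mathcal{C}'$ consists of one such closed curve per orbit class, with an analogous selection made in the $k(F) = n$ case.

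Disjointness is essentially by construction: viewed as chords of the polygon $\P_F$, the $\gamma_i$ (respectively the $\tilde{\gamma}_i$) are nested around the rotation axis of $\theta_F$ and hence pairwise disjoint in the interior of $\P_F$. Any apparent crossing introduced by the edge identifications is ruled out by noting that paired edges inherit matching cyclic orderings of endpoint orbits under $\theta_F$, so no two chords in $\mathcal{C}'$ meet transversely after gluing. For essentiality and the count, I would observe that the arcs of $\mathcal{C}'$, together with the edges of $\P_F$, partition the polygon into subregions which glue in pairs under the side-pairing into thrice-holed spheres. A region-by-region inspection (handling separately the cases where $n_1$ or $n_2$ equals $2$) shows that no region collapses to a disk or annulus, so every element of $\mathcal{C}'$ is essential. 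A Riemann--Hurwitz computation applied to the signature $(0;n_1,n_2,n)$ then shows that the number of pants pieces equals $2g-2$, and hence $|\mathcal{C}'| = 3g-3$.

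The main obstacle I anticipate is Step~1: giving a uniform combinatorial description of which arcs and concatenations close up, valid for both $k(F)=2n$ and $k(F)=n$, and across all admissible values of $(n_1,n_2,n)$. Because the side-pairing is controlled entirely by the single parameter $q = (n/n_2)c_3^{-1}$, however, this task should reduce to an orbit analysis for the affine map $m \mapsto m+qj$ on $\Z_n$, and the cycle structure of this map should translate directly into the description of $\mathcal{C}'$ and the verification of the count $3g-3$.
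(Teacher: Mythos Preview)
Your plan and the paper's proof rest on the same underlying combinatorics but are organized differently, and your version understates one phenomenon that the paper handles explicitly.

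The paper does not first isolate a subcollection $\mathcal{C}'$ and then verify disjointness, essentiality, and the count $3g-3$. Instead it cuts $\P_F$ along \emph{all} the chords $\gamma_i$, obtaining strips $R_i$ (two triangles $R_1,R_n$ and $n-2$ quadrilaterals), and then proves a single symmetry claim: if $a_i^{-1}\sim a_j$ under the side-pairing, then also $a_{2n+1-i}^{-1}\sim a_{2n+1-j}$. This is exactly the ``orbit analysis for $m\mapsto m+qj$'' you anticipate, but packaged as a one-line identity that forces $R_i^{-1}=R_j$, so the strips glue in pairs. The paper then simply inspects what $R_i\cup R_i^{-1}$ can be.

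Here your sketch is too optimistic. You assert that the subregions ``glue in pairs under the side-pairing into thrice-holed spheres'' and that ``no region collapses to a disk or annulus''. In fact the paper shows that $R_i\cup R_i^{-1}$ can be a cylinder, a pair of pants, \emph{or} a four-holed sphere, depending on which of the vertices $v_i,\hat v_i,w_i,\hat w_i$ coincide after gluing. The cylinders mean that some of the $\gamma_i$ (or concatenations $\gamma_i*\gamma_j$) are parallel and must be discarded from the pants system; the four-holed spheres are precisely why the curves $\tilde\gamma_i*\tilde\gamma_j$ appear in $\mathcal{C}$ at all---they are needed to cut those pieces further into two pairs of pants. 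Consequently your Step~1 will not, by itself, produce only pants pieces, and your essentiality and Riemann--Hurwitz count would have to be patched to account for discarding parallel curves and inserting the $\tilde\gamma$ curves. None of this is fatal, but the clean statement to prove first is the symmetry $a_i^{-1}\sim a_j \Rightarrow a_{2n+1-i}^{-1}\sim a_{2n+1-j}$; once that is in hand the rest is bookkeeping of the three piece-types.
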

\begin{proof}
It suffices to show that, upon cutting along the curves $\gamma_i$'s and $\tilde{\gamma}_i$'s, $\P_{F}$ breaks into cylinders and pair of pants. Let $R_1$ and $R_n$ denote the triangles $\Delta a_1 \gamma_1 a_{2n}$ and $\Delta a_n a_{n+1} \gamma_{n-1}$ respectively. For $i=2,\cdots,n-1$, let $R_i$ denote the quadrilateral bounded by $a_i, \gamma_i, a_{2n+1-i}$ and $\gamma_{i-1}$. Thus by construction, each $R_i$ can be uniquely represented by the pair $(a_i,a_{2n+1-i})$. We define $R_{i}^{-1}:=(a_{i}^{-1},a_{2n+1-i}^{-1})$. 
We claim that, if $a_{i}^{-1}\sim a_j$ then $a_{2n+1-i}^{-1}\sim a_{2n+1-j}$. To prove the claim, we consider two cases depending on whether $i$ is odd or even.

\textit{Case I: Suppose that $i=2k+1$, for some $k$}. Then from the definition of $\P_{F}$, it follows that $a_{i}^{-1}=a_{2k+1}^{-1}\sim a_{2(k+qj)}$. Therefore, we have $j \equiv 2(k+qj) \pmod{2n}$, which implies that $2n+1-j \equiv 2(n-k-qj)+1 \pmod{2n}$. Hence, it follows that $a_{2n+1-j}^{-1}\sim a_{2(n-k-qj+qj)}=a_{2n-2k}=a_{2n+1-i}$, which proves the claim.

\textit{Case II: Suppose that $i=2k$, for some $k$}. Then $j \equiv 2(k-qj)+1 \mathrm{2n}$,  from which it follows that 
$$a_{2n+1-i}^{-1}=a_{2(n-k)+1)}^{-1}\sim a_{2(n-k+qj)}=a_{2n+1-j},$$ thereby settling the claim.

Finally, we observe that if $a_{i}^{-1}\sim a_{2n+1-i}$, then $R_i$ forms a cylinder. Otherwise, suppose $a_{i}^{-1}\sim a_j$. Then from the previous claim, $a_{2n+1-i}^{-1}\sim a_{2n+1-j}$. Hence, $R_{i}^{-1}=R_j$ and $R_i\cup R_{i}^{-1}$, with the given side-pairing relations, will form a cylinder, a pair of pants, or a four holed sphere, depending on the vertices of the corresponding sides. This is illustrated in Figure \ref{fig:poly_part} below.

\begin{figure}[htbp]
\centering
\labellist
	\small
	\pinlabel $\gamma_1$ at 85 220
	\pinlabel $\gamma_2$ at 160 138
	\pinlabel $R_1$ at 85 250
	\pinlabel $R_1^{-1}$ at 175 250
	\pinlabel $R_1$ at 335 250
	\pinlabel $R_1^{-1}$ at 440 250
	\pinlabel $R_1$ at 585 250
	\pinlabel $R_1^{-1}$ at 690 250
	\pinlabel $a_1$ at 40 170
	\pinlabel $a_1$ at 210 175
	\pinlabel $a_2$ at 98 190
	\pinlabel $a_2$ at 156 170
	\pinlabel $\gamma_1$ at 335 220
	\pinlabel $\gamma_2$ at 435 138
	\pinlabel $\gamma_3$ at 435 200
	\pinlabel $a_1$ at 300 170
	\pinlabel $a_1$ at 495 175
	\pinlabel $a_2$ at 342 190
	\pinlabel $a_2$ at 406 170
	\pinlabel $\gamma_1$ at 594 220
	\pinlabel $\gamma_2$ at 694 138
	\pinlabel $\gamma_3$ at 684 200
	\pinlabel $a_1$ at 559 170
	\pinlabel $a_1$ at 750 175
	\pinlabel $a_2$ at 595 190
	\pinlabel $a_2$ at 660 170
	\pinlabel $R_i$ at 80 -8
	\pinlabel $R_i^{-1}$ at 190 5
	\pinlabel $b_j$ at 20 60
	\pinlabel $c_j$ at 106 60
	\pinlabel $b_j$ at 250 60
	\pinlabel $c_j$ at 163 70
	\pinlabel $b_j$ at 274 60
	\pinlabel $c_j$ at 360 60
	\pinlabel $b_j$ at 498 60
	\pinlabel $c_j$ at 415 70
	\pinlabel $b_j$ at 520 60
	\pinlabel $c_j$ at 608 60
	\pinlabel $b_j$ at 747 60
    \pinlabel $c_j$ at 660 70
	\pinlabel $R_i$ at 350 -8
	\pinlabel $R_i^{-1}$ at 443 0
	\pinlabel $R_i$ at 590 -8
	\pinlabel $R_i^{-1}$ at 685 0
	\endlabellist
	\includegraphics[width=70ex]{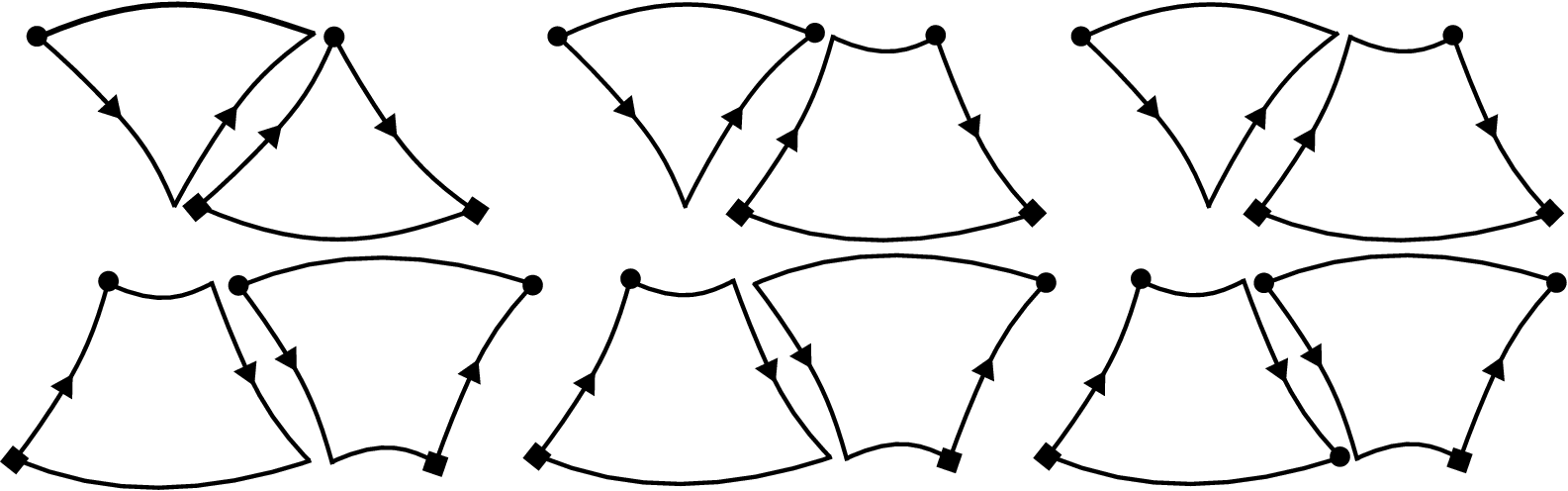}
\caption{Identifying the partitioned pieces of the polygon.}
\label{fig:poly_pieces}
\end{figure}

 It is easy to see that in the case of the cylinder and the pair of pants, the pants curves belong to the collection $\{\gamma_i,\gamma_j, \gamma_{i-1},\gamma_{j-1}, \gamma_i\ast \gamma_j , \gamma_i\ast \gamma_{j-1}, \gamma_{i-1}\ast \gamma_j, \gamma_{i-1}\ast \gamma_{j-1}\}$. Furthermore, in the case of a four holed sphere, the pants curves come from  $\{\gamma_i,\gamma_j, \gamma_{i-1},\gamma_{j-1}, \tilde{\gamma}_i\ast \tilde{\gamma}_j\}$. Thus, in all cases, the pants curves belong to the multicurve $\mathcal{C}$. 
\end{proof}

\noindent As immediate consequences of Proposition \ref{prop:pants}, we observe the following:
\begin{cor}
Given a polygon $\P_F$ as in Proposition \ref{prop:pants} and denoting $v_i=\iota(a_i)$, $w_i=t(a_i)$, $\hat{v}_i=t(a_{2n+1-i})$ and $\hat{w}_i=\iota(a_{2n+1-i})$, it follows that, for each $i=2,\ldots,n-1$, these are the four vertices of the quadrilateral $R_i$ as mentioned above. Since $R_1$ and $R_n$ are triangles, we have $v_1=\hat{v}_1$ and $w_n=\hat{w}_n$. Moreover, $w_i=v_{i+1}$ and $\hat{w}_i=\hat{v}_{i+1}$, for $i=1,\ldots, n-1$. With this, one can summarize the descriptions of $R_i\cup R_i^{-1}$ as follows.
\begin{enumerate}[(i)]
\item For $i=1$, we have the following possibilities for $R_1\cup R_{1}^{-1}$. 
\begin{enumerate}[(a)]
\item If $R_{1}^{-1}=R_n$, then $R_1\cup R_{1}^{-1}$ forms a cylinder (see Fig. \ref{fig:poly_pieces}, top left).
\item If $w_1=\hat{w}_1$, then $R_1\cup R_{1}^{-1}$ forms a pair of pants (see Fig. \ref{fig:poly_pieces}, top middle).
\item If $w_1\neq\hat{w}_1$, then $R_1\cup R_{1}^{-1}$ forms a cylinder (see Fig. \ref{fig:poly_pieces}, top right).
\item If $w_i=\hat{w}_i$ and $v_i\neq\hat{v}_i$ (or $v_i=\hat{v}_i$ and $w_i\neq\hat{w}_i$), then $R_i\cup R_{i}^{-1}$ forms a pair of pants (see Fig. \ref{fig:poly_pieces}, bottom left).
\item If $w_1\neq\hat{w}_1$ and $v_i\neq\hat{v}_i$, then $R_i\cup R_{i}^{-1}$ forms a cylinder (see Fig \ref{fig:poly_pieces}, bottom middle).
\item If $w_1=\hat{w}_1$ and $v_i=\hat{v}_i$, then $R_i\cup R_{i}^{-1}$ forms a four holed sphere (see Fig. \ref{fig:poly_pieces}, bottom right).
\end{enumerate}

\item Moreover, the following two conditions provide the necessary and sufficient criteria for a general description of $R_i\cup R_{i}^{-1}$.

\begin{enumerate}[(a)]
\item $v_{2k+1}=\hat{v}_{2k+1}$ if and only if $\gcd(n,qj)|2k$.
\item $w_{2k+1}=\hat{w}_{2k+1}$ if and only if $\gcd(n,qj-1)|2k+1$.
\end{enumerate}
\end{enumerate}
\end{cor}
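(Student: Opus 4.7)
The plan is to derive both parts of the corollary directly from the side-pairing $W(\P_F)$ of Theorem~\ref{res:1} together with the vertex conventions fixed in the statement. First I would write down the equivalence on the $2n$ polygon vertices of $\P_F$ induced by the identifications $a_{2m+1}^{-1}\sim a_{2z}$, where $z\equiv m+qj\pmod n$. Since $a_{2m+1}^{-1}$ is glued to $a_{2z}$ with the opposite orientation, one obtains the basic identifications $w_{2m+1}\sim v_{2z}$ and $v_{2m+1}\sim w_{2z}$, and combining these with the boundary adjacencies $w_i=v_{i+1}$ and $\hat{w}_i=\hat{v}_{i+1}$ yields the clean picture that the odd-indexed vertices $v_{2m+1}$ lie in orbits under the cyclic shift $m\mapsto m+qj\pmod n$, while the even-indexed vertices $v_{2m}$ lie in orbits under $m\mapsto m+(qj-1)\pmod n$.

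For part~(ii) I would express $\hat{v}_{2k+1}$ and $\hat{w}_{2k+1}$ in polygon coordinates and reduce each criterion to a single modular congruence. Concretely, $\hat{v}_{2k+1}=t(a_{2n-2k})=v_{2n-2k+1}\equiv v_{1-2k}\pmod{2n}$, so $v_{2k+1}=\hat{v}_{2k+1}$ in the surface precisely when the congruence $2k+1+2sqj\equiv 1-2k\pmod{2n}$ has a solution in~$s$; after cancelling a factor of $2$ this reads $sqj\equiv -2k\pmod n$, whose solvability is exactly $\gcd(n,qj)\mid 2k$. The analogous computation, using the even-orbit shift $qj-1$ together with $w_{2k+1}=v_{2k+2}$ and $\hat{w}_{2k+1}=v_{2n-2k}$, forces the criterion $\gcd(n,qj-1)\mid 2k+1$.

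For part~(i), I would combine the case analysis from the proof of Proposition~\ref{prop:pants} with the vertex identifications above. Each piece $R_i\cup R_i^{-1}$ is obtained by gluing two hyperbolic quadrilaterals (or triangles when $i=1,n$) along two edges, so its Euler characteristic equals the number of distinct vertex classes minus $4$. After the forced edge-gluing identifications $v_i\sim w_j$, $w_i\sim v_j$, $\hat{v}_i\sim \hat{w}_j$, $\hat{w}_i\sim \hat{v}_j$, there are generically $4$ classes, giving $\chi=0$ and a cylinder; a single extra coincidence among $v_i=\hat{v}_i$ or $w_i=\hat{w}_i$ collapses one class, giving $\chi=-1$ and a pair of pants; both coincidences together give $\chi=-2$ and hence a four-holed sphere, matching the tabulated cases (a)--(f). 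The triangular cases $i=1,n$ are treated as degenerate: $v_1=\hat{v}_1$ and $w_n=\hat{w}_n$ hold automatically, which accounts for the separate listing at $i=1$.

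The main obstacle is the bookkeeping required to keep the odd and even orbit systems disjoint, since the side-pairing flips the parity of the edge index at every application. One must verify that in each congruence appearing in~(ii) the two vertices being compared have matching parity, as otherwise the shifts by $2qj$ and $2(qj-1)$ cannot be juxtaposed. Once this parity-matching is in place, the rest of the argument is essentially a modular-arithmetic computation, and the Euler-characteristic classification in~(i) reduces to a direct enumeration of how many of the coincidences from~(ii) are realised for each~$i$.
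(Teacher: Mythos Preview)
Your approach is correct and, for part~(ii), essentially identical to the paper's: both arguments trace the vertex identifications induced by the side-pairing $a_{2m+1}^{-1}\sim a_{2z}$, observe that odd-indexed vertices $v_{2m+1}$ cycle under $m\mapsto m+qj\pmod n$ while even-indexed vertices cycle under $m\mapsto m+(qj-1)\pmod n$, and then reduce each equality $v_{2k+1}=\hat v_{2k+1}$ or $w_{2k+1}=\hat w_{2k+1}$ to the solvability of a single linear congruence modulo~$n$. The paper writes the chain of identifications out explicitly (e.g.\ $\iota(a_{2k+1})=t(a_{2(k+qj)})=\cdots$), while you package it as an orbit description, but the content is the same.

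For part~(i) the paper does not give an independent argument at all; it declares that part~(ii) is the general case and relies on the case analysis already carried out in the proof of Proposition~\ref{prop:pants} together with Figure~\ref{fig:poly_pieces}. Your Euler-characteristic computation is therefore a genuine addition: it replaces the picture-based enumeration with the observation that gluing the two quadrilaterals along their $a$-edges yields a CW surface with $6$ edges, $2$ faces, and $4$ vertex classes before any extra coincidences, and that each coincidence $v_i=\hat v_i$ or $w_i=\hat w_i$ kills one vertex class and simultaneously splits off one more boundary circle, so $\chi$ drops by~$1$ while $b$ rises by~$1$ and planarity is preserved. This last point (that the number of boundary components moves in lockstep with~$\chi$, keeping genus zero) is what makes your $\chi$-count decisive rather than merely suggestive; it would be worth stating explicitly, since $\chi=-1$ alone does not distinguish a pair of pants from a one-holed torus.
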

\begin{proof}
It suffices to prove Case (ii), which is the more general case.
\begin{enumerate}[(a)]
\item It follows that $v_{2k+1}=\hat{v}_{2k+1}$
     $\iff \iota(a_{2k+1})=t(a_{2(n-k)})$. Since $\iota(a_{2k+1})=t(a_{2k+1}^{-1})=t(a_{2(k+qj)})=a_{2(k+mqj)}$, we have:
     \begin{eqnarray*}
     v_{2k+1}=\hat{v}_{2k+1} & \iff & 2(n-k)=2(k+mqj)(\textrm{mod }2n), \text{where } m\in \mathbb{N}\\
     &\iff & n-k \equiv k+mj \pmod{n}\\
     &\iff &k+mj=nM+(n-k), \text{where } M \in\mathbb{Z}\\
     &\iff & n(M+1)-mqj=2k\\
     &\iff & \gcd(n,qj)|2k,
     \end{eqnarray*}
and the assertion follows.
\item We see that $w_{2k+1}=\hat{w}_{2k+1} \iff t(a_{2k+1})=\iota(a_{2(n-k)})$. As $\iota(a_{2(n-k)})=t(a_{2(n-k-1)+1})=t(a_{2(n-k+m(qj-1)-1)+1})$, we have:
     \begin{eqnarray*}
    w_{2k+1}=\hat{w}_{2k+1} & \iff & 2k+1 \equiv 2(n-k+m(qj-1)-1)+1\pmod{2n}, \text{where } m \in \mathbb{N} \\
     &\iff & 2k\equiv 2(n-k+m(qj-1)-1)\pmod{2n}\\
     &\iff & k\equiv n-k+m(qj-1)-1 \pmod{n}\\
     &\iff & n-k+m(qj-1)-1=nM'+k, \text{where } M'\in\mathbb{Z} \\
     &\iff & n(1-M')+m(qj-1)=2k+1\\
     &\iff & \gcd(n,qj-1)|2k+1,
     \end{eqnarray*}
     which proves the assertion.
\end{enumerate}   
\end{proof}

\begin{prop}\label{prop:admissible pants}
    Let $F \in {\Mod}(S_g)$ be an irreducible Type 1 action given by the data set $D_F = (n,0; (c_1,n_1),(c_2,n_2), (c_3,n))$ and realized by the unique hyperbolic polygon $\P_F$. Let $\mathcal{C}$ be as above. Then the length of each curve in $\mathcal{C}$ is bounded by the Ber's constant of $S_g$.
\end{prop}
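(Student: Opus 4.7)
The plan is to bound the length of every curve in $\mathcal{C}$ by a universal multiple of the circumradius of the hyperbolic polygon $\mathcal{P}_F$, and then to compare this bound with a known explicit estimate for the Bers' constant $\B_g$. Since $\mathcal{P}_F$ is invariant under the rotation $\theta_F = 2\pi c_3^{-1}/n$ about its center $O$, all of its vertices lie on a hyperbolic circle of some radius $\rho_F$ centered at $O$, and the midpoints of its edges lie on a concentric circle of smaller radius. Each geodesic $\gamma_i$ is therefore a chord of the outer circle and each $\tilde\gamma_i$ a chord of the inner one, so the standard hyperbolic chord formula gives $\ell(\gamma_i), \ell(\tilde\gamma_i) \leq 2\rho_F$. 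Since the geodesic representative of an isotopy class in $S_g$ cannot exceed the length of any smooth representative, the concatenations $\gamma_i \ast \gamma_j$ and $\tilde\gamma_i \ast \tilde\gamma_j$ have geodesic length at most $4\rho_F$, and hence $\ell(\alpha) \leq 4\rho_F$ for every $\alpha \in \mathcal{C}$.

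To bound $\rho_F$ in terms of $g$, one uses that $\mathcal{P}_F$, together with its side-pairing $W(\mathcal{P}_F)$ from Theorem~\ref{res:1}, realizes the full hyperbolic surface $S_g$, so $\mathrm{Area}(\mathcal{P}_F) = 4\pi(g-1)$. Moreover, $\mathcal{P}_F$ is tiled by the $k(F)$ isosceles triangles $\triangle O V_i V_{i+1}$, where $V_i$ is the $i$-th vertex; the apex angles at $O$ all equal $2\pi/k(F)$, while the base angles alternate according to the orbifold data $(n_1,n_2,n)$. Applying the hyperbolic law of cosines to these triangles, together with the Gauss--Bonnet area constraint, gives a closed-form expression for $\rho_F$ purely in terms of $(n_1,n_2,n)$, which in turn yields an estimate of the form $\rho_F \leq \cosh^{-1}(2g-1) + O(1)$. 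Comparing $4\rho_F = O(\log g)$ with a standard polynomial-in-$g$ upper estimate for $\B_g$ (for example, Buser's bound) then settles the claim for all sufficiently large $g$, while the finitely many low-genus exceptions are checked directly from the closed-form expression for $\rho_F$.

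The main obstacle is the second step: the explicit trigonometric identity relating $\rho_F$ to $(n_1,n_2,n)$ requires careful case analysis, since the apex angles of the tiling triangles are uniform but the base angles alternate between the two types of vertices, and the final inequality $4\rho_F \leq \B_g$ must be verified in the small-genus regime where $\B_g$ is not known sharply and the chord bound $4\rho_F$ is itself somewhat slack.
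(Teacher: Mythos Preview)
Your overall strategy is the same as the paper's: both compute the chord lengths inside $\mathcal{P}_F$ via hyperbolic trigonometry on the isosceles triangles with apex at the centre $O$, and then must compare the outcome with $\B_g$. Two points need correction.

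First, a minor geometric slip. When $n_1\neq n_2$ the polygon $\mathcal{P}_F$ is only \emph{semi}-regular, so its $2n$ vertices do \emph{not} lie on a single circle. The paper in fact records two radii $L_1,L_2$ (the distances from $O$ to the vertices with interior angle $2\pi/n_1$ and $2\pi/n_2$ respectively) and obtains an exact expression for $\cosh\ell(\gamma_i)$ that depends on the parity of $i$. Your chord estimate survives if you set $\rho_F=\max(L_1,L_2)$, but the sentence ``all of its vertices lie on a hyperbolic circle of some radius $\rho_F$'' is false as stated.

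Second, and more seriously, the final comparison points the wrong way. You propose to compare $4\rho_F=O(\log g)$ with ``a standard polynomial-in-$g$ \emph{upper} estimate for $\B_g$ (for example, Buser's bound)''. An upper bound on $\B_g$ cannot yield $4\rho_F\leq\B_g$; what is required is a \emph{lower} bound on the Bers' constant. Such lower bounds do exist (for instance $\B_g\gtrsim\sqrt{g}$), and with one of those the asymptotic inequality $O(\log g)\leq c\sqrt{g}$ does hold for large $g$, leaving only the small-genus verification you already flagged. In fairness, the paper's own argument is equally terse at precisely this step: after writing down the closed form for $\cosh\ell(\gamma_i)$ and the resulting bound $2\max\{A,B\}$, it simply asserts that ``the assertion follows from bounds on $\ell(\gamma_i)$'' without ever carrying out the comparison with $\B_g$. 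So the difficulty you single out as the ``main obstacle'' is genuine, and neither your sketch nor the paper's proof actually closes it.
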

\begin{proof}
 From the construction of the polygon $\P_{F}$, it follows that the interior angles at vertices $\iota(a_{2i-1})$ and $\iota(a_{2i})$ are equal to $\frac{2\pi}{n_1}$ and $\frac{2\pi}{n_2}$ respectively. Let $O$ denote the centre of $\P_{F}$. Also, let $L_1$ (respectively $L_2$) be the length of the geodesics joining $O$ to $\iota(a_{2i-1})$ (respectively, to $\iota(a_{2i})$). Using basic hyperbolic trigonometry, we see that
\begin{eqnarray*}
  \cosh(L_1) & = & \dfrac{\cos(\frac{\pi}{n_1})\cos(\frac{\pi}{n})+\cos(\frac{\pi}{n_2})}{\sin(\frac{\pi}{n_1})\sin(\frac{\pi}{n})} \text{ and}  \\ 
  \cosh(L_2) & = &\dfrac{\cos(\frac{\pi}{n_2})\cos(\frac{\pi}{n})+\cos(\frac{\pi}{n_1})}{\sin(\frac{\pi}{n_2})\sin(\frac{\pi}{n})}
\end{eqnarray*}
 Let $P_i$ and $Q_i$ be the end points of $\gamma_i$. Consider the isosceles triangles, $\Delta OP_iQ_i$, whose bases are $\gamma_i$. From the symmetries of $P_{\kappa(D)}$, we have 
 $$\angle P_iOQ_i=\frac{2\pi i}{2n}=\frac{i \pi}{n}.$$
Again, from basic hyperbolic trigonometry, we get:
$$\cosh(\ell(\gamma_{i}))= 
\begin{cases} 
\cosh^2(L_2)+\sinh^2(L_2)\cos(\frac{i\pi}{n}), & \text{if } i \text{ is odd, and} \\
\cosh^2(L_1)+\sinh^2(L_1)\cos(\frac{i\pi}{n}), & \text{if } i \text{ is even}.
\end{cases}$$
    This implies that the length of any curve from the collection $\mathcal{C}$ is bounded above by the constant $2\max\{A,B\}$ where 
    \begin{eqnarray*} 
    A & = &\cosh^{-1}\Big(\cosh^2(L_2)+\sinh^2(L_2)\cos(\frac{i\pi}{n})\Big) \text{ and} \\
    B & = &\cosh^{-1}\Big(\cosh^2(L_1)+\sinh^2(L_1)\cos(\frac{i\pi}{n})\Big). 
    \end{eqnarray*}
Finally, the assertion follows from bounds on $\ell(\gamma_{i})$.
\end{proof}
Combining Propositions \ref{prop:pants} and \ref{prop:admissible pants}, we obtain the following theorem.
\begin{theorem}
\label{thm:admiss_pants_decomp}
Let $F \in {\Mod}(S_g)$ be an irreducible Type 1 action given by the data set $D_F = (n,0; (c_1,n_1), (c_2,n_2), (c_3,n))$ and realized by the unique hyperbolic polygon $\P_F$ as mentioned in Theorem \ref{res:1}. Then there exists an admissible pants decomposition for $\P_F$ that is a sub-collection of $\mathcal{C}$ where $\mathcal{C}$ refers to the collection of curves mentioned in Proposition \ref{prop:pants}.
\end{theorem}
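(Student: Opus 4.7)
The plan is to simply assemble the two propositions already established in this section. First I would invoke Proposition \ref{prop:pants}, which guarantees that some subcollection $\mathcal{C}_0 \subseteq \mathcal{C}$ of the simple closed curves among $\{\gamma_i\}, \{\tilde{\gamma}_i\}, \{\gamma_i * \gamma_j\}, \{\tilde{\gamma}_i * \tilde{\gamma}_j\}$ is a pants decomposition of $S_g$; the proposition was proved by explicitly cutting $\P_F$ along the $\gamma_i$'s and $\tilde{\gamma}_i$'s and matching the pieces $R_i \cup R_i^{-1}$ via the side-pairing $W(\P_F)$ into cylinders, pairs of pants, and four-holed spheres.

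Next I would apply Proposition \ref{prop:admissible pants}, which shows that every curve $\alpha \in \mathcal{C}$ satisfies
\[
\ell(\alpha) \leq 2\max\{A,B\},
\]
where $A$ and $B$ are the explicit constants computed from the hyperbolic trigonometric identities involving $L_1$, $L_2$, $n$, $n_1$, $n_2$ derived in its proof. In particular, this bound holds for every curve in the subcollection $\mathcal{C}_0$ identified above. Since $\mathcal{C}_0 \subset \mathcal{C}$, every pants curve of $\mathcal{C}_0$ has length at most $2\max\{A,B\}$ in the hyperbolic structure on $S_g$ induced by $\P_F$.

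Finally, since $2\max\{A,B\}$ is bounded above by the Bers' constant $\B_g$ (this is where the universal character of $\B_g$ enters: by definition, $\B_g$ dominates the minimal Bers pants length over all of $\Teich(S_g)$, and the explicit constants $A,B$ depend only on $n,n_1,n_2$, hence only on the data set $D_F$ and ultimately on $g$ via the Riemann--Hurwitz relation), the subcollection $\mathcal{C}_0$ satisfies $\ell(\alpha_j) \leq \B_g$ for every $\alpha_j \in \mathcal{C}_0$. By the definition of admissibility, $\mathcal{C}_0$ is therefore an admissible pants decomposition for $\P_F$, proving the theorem.

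The main obstacle, such as it is, is not in the combination itself (which is immediate) but in verifying cleanly that the explicit upper bound $2\max\{A,B\}$ obtained in Proposition \ref{prop:admissible pants} can be absorbed into $\B_g$; since $\B_g$ is a universal constant for genus $g$ and the quantities $A,B$ are determined by finitely many discrete invariants of $D_F$ for each fixed $g$, this comparison is a finite check once the Riemann--Hurwitz equation pins down the possible signatures $(0;n_1,n_2,n)$ compatible with the given $g$.
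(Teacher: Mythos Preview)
Your proposal is correct and matches the paper's own treatment exactly: the paper simply states that Theorem~\ref{thm:admiss_pants_decomp} is obtained by combining Propositions~\ref{prop:pants} and~\ref{prop:admissible pants}, with no further argument. The ``obstacle'' you flag about comparing $2\max\{A,B\}$ to $\B_g$ is in fact already absorbed into the \emph{statement} of Proposition~\ref{prop:admissible pants} (which asserts directly that every curve in $\mathcal{C}$ has length bounded by $\B_g$), so nothing additional is needed for the theorem itself.
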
  

\section{Estimating distances between fixed points of irreducible type 1 actions}
\label{Section:distances_Pants graph}
Let $F \in {\Mod}(S_g)$ be an irreducible Type 1 action given by the data set $D_F 
= (n,0; (c_1,n_1),(c_2,n_2), (c_3,n))$ and realized by the hyperbolic polygon $\P_F
$. In this section, we obtain a combinatorial encoding of a given 
admissible pants decomposition of $S_g$ with respect to $\P_F$, as an ordered $2g$-tuple of positive integers. By realizing the $2g$-tuple as a 
vertex of the pants graph, we then compute distances between two such vertices. Finally, we provide an estimate of the distance between the fixed points of two irreducible Type 1 actions $F_1, F_2 \in {\Mod}(S_g)$ in the moduli space $\M(S_g)$ via the quasi-isometry due to Brock~\cite{JB}. 
\subsection{A combinatorial encoding of an admissible pants decomposition} 
In Section \ref{Section:Admissible_Pants Decomposition} (see Proposition \ref{prop:pants}) $\P_{F}$ was decomposed via an admissible pants decomposition into canonical building blocks comprising cylinders, pairs of pants and four holed spheres, glued across compatible boundary components as shown in the diagram below.
Associating a positive integer to each of the boundary components of these building blocks, which are loops, one can represent these cylinders, pairs of pants and four holed spheres as a pair, triple and quadruple of positive integers respectively. More specifically, besides associating a positive integer to a loop representing a boundary component of these building blocks, we associate positive half integers (i.e. a positive integer divided by $2$) to half loops representing half of a boundary component of these building blocks with the interpretation that two half loops are \textit{closed} if together they form a loop. Thus a boundary component of a building block can be represented either as a positive integer (viewed as a single loop) or as a pair of positive half integers viewed as the union of two closed half loops identified through their end points. As a consequence, one can accordingly represent a building block either as a canonical $k$-tuple of positive integers or a canonical $k'$-tuple comprising positive integers and half integers.  Finally, combining all these tuples corresponding to the canonical building blocks as mentioned in Proposition \ref{prop:pants}, we combinatorially represent an admissible pants decomposition of $\P_{F}$ as a \textit{canonical $g$-tuple}. A precise definition of a canonical $g$-tuple is as follows.

\begin{defn}
A \textit{canonical $g$-tuple} is a surjective map $$f:\{1,2,\cdots,2g\}\rightarrow\{1,2,\cdots,g\},$$ where $f^{-1}(\{i\})$ has exactly two numbers, for all $i$. Moreover, two canonical $g$-tuples $f_1$ and $f_2$ are said to be \textit{equivalent} if at least one of the following conditions hold:
\begin{enumerate}[(i)]
\item $f_2=f_1 \circ \sigma_{0}$, where $\sigma_{0}=(1(2g))(2 (2g-1))\cdots((g-1)(g+1))$ is an element of $\Sigma_{2g}$, the symmetric group on $2g$ letters.
\item $f_2=f_1\circ\phi_1$, where $\phi_{1}=(1,2)$.
\item $f_2=f_1\circ\phi_{2g-1}$, where $\phi_{2g-1}=((2g-1) (2g))$
\item If $f_1(i)=f_1(j)$, for some $i\neq j$, then $f_2(i)=f_2(j)$.
\end{enumerate} 
We denote the equivalent class of a canonical tuple $f$ by $[f]$. 
\end{defn}

For $1 \leq i \leq 2g_1$, let $\phi_i$ denote a transposition of the form $(i \,\,i+1) \in \Sigma_{2g}$. Given two non-equivalent canonical tuples $f_1$ and $f_2$, there exists a unique element $\sigma(f_1,f_2) \in\Sigma_{2g}$ such that $f_2=f_1\circ\sigma(f_1,f_2)$. Given two canonical $g$-tuples, one can define their distance in the pants graph as mentioned below.

\begin{defn}
Let $f_1$ and $f_2$ be canconical $g$-tuples. 
\begin{enumerate}[(i)]
\item The \textit{distance between} $f_1$ and $f_2$, denoted by $d(f_1,f_2)$, is the minimal length of a word in $\{\sigma_i: 1 \leq i \leq 2g_1\}$ that represents $ \sigma(f_1,f_2)$ in $\Sigma_{2g}$. 
\item The \textit{distance between the equivalent classes} of $[f_1]$ and $[f_2]$, denoted by $\mathbb{D}([f_1],[f_2])$, is given by: 
$$\mathbb{D}([f_1],[f_2]):=\min\{d(\tilde{f},\tilde{g}):\tilde{f}\in [f_1],\tilde{g}\in[f_2]\}.$$
\end{enumerate}
\end{defn}  

\begin{prop} \label{pants algorithm}
Let $\P_F$ denote the hyperbolic polygon representing an irreducible Type 1 action on $S_g$. An admissible pants decomposition of $\P_F$ as described in Proposition \ref{prop:pants}, can be represented by a canonical $2g$-tuple. Moreover, any two such representations are equivalent as canonical $2g$-tuples.
\end{prop}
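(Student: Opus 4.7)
The plan is to exhibit an explicit algorithm that, starting from an admissible pants decomposition of $\P_F$ produced by Proposition~\ref{prop:pants}, reads off an ordered tuple encoding the adjacency pattern of the building blocks---the cylinders, pairs of pants, and four-holed spheres obtained from the pieces $R_i \cup R_i^{-1}$ in that proof. First I would fix a canonical traversal of these blocks by starting from the triangular end $R_1 \cup R_1^{-1}$ and proceeding through the quadrilaterals $R_2 \cup R_2^{-1}, \ldots, R_{n-1} \cup R_{n-1}^{-1}$ to the opposite end $R_n \cup R_n^{-1}$, recording at each step which pants curves from $\C$ bound the current block. Each such boundary is either a single geodesic loop $\gamma_i$ (or $\tilde{\gamma}_i$) or a concatenation of two half-loops, and in the latter case the half-loop convention introduced before the definition of a canonical tuple enters.

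Second, I would assign positive integer labels to the distinct pants curves in the order they are first encountered, and at every boundary slot write the corresponding label, or the pair of half-labels when the boundary is the union of two closed half-loops. Concatenating these records over all building blocks produces an ordered list. The genus computation via Riemann-Hurwitz for $D_F$, combined with the fact that every curve of a pants decomposition of $S_g$ is shared by exactly two boundary slots, shows that, after contracting paired half-loop slots into single positions, the resulting assignment has the form $f\colon\{1,\ldots,2g\}\to\{1,\ldots,g\}$ with $|f^{-1}(i)|=2$ for every $i$, i.e.\ a canonical tuple in the sense of the definition.

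The third step addresses uniqueness. The algorithm depends on three discrete choices: the orientation of the traversal (clockwise versus counterclockwise about the centre $O$ of $\P_F$); the internal ordering of the boundary components at each of the two triangular ends $R_1$ and $R_n$; and the bijection chosen between pants curves and $\{1,\ldots,g\}$. Reversing the traversal swaps $R_i \cup R_i^{-1}$ with $R_{n+1-i} \cup R_{n+1-i}^{-1}$ and corresponds precisely to precomposition by $\sigma_0$; swapping the boundary components at $R_1$ or $R_n$ yields $\phi_1$ or $\phi_{2g-1}$ respectively; and an arbitrary relabelling of pants curves is exactly condition (iv) of the definition of equivalence. Any two tuples produced by the algorithm therefore differ by a composition of these four operations and consequently lie in the same equivalence class.

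The principal obstacle is the careful book-keeping in the third step for the four-holed-sphere blocks, where a single boundary of the block is represented by two half-loop labels of the form $\tilde{\gamma}_i$, $\tilde{\gamma}_j$ that must be bundled into one slot of the tuple. One needs to verify, case by case along the classification of $R_i \cup R_i^{-1}$ given in the corollary after Proposition~\ref{prop:pants}, that reversal of traversal correctly interchanges the paired half-labels and that no additional ambiguity appears beyond the generators $\sigma_0$, $\phi_1$, $\phi_{2g-1}$, and the relabelling allowed by (iv). The combinatorial invariants $\gcd(n,qj)$ and $\gcd(n,qj-1)$ from that corollary will control which blocks are of the four-holed-sphere type, and checking the equivalence relation on these blocks is the computational core of the argument.
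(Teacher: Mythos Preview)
Your Step~1 contains a genuine gap. The traversal ``$R_1\cup R_1^{-1},\,R_2\cup R_2^{-1},\,\ldots,\,R_n\cup R_n^{-1}$'' double-counts every building block: by definition $R_i^{-1}=R_j$ where $a_i^{-1}\sim a_j$, so the block you index by $i$ reappears as the block indexed by $j$. More seriously, even after discarding repetitions, consecutive indices $i,i+1$ do \emph{not} record adjacency of the glued blocks in $S_g$. The curve $\gamma_i$ separates $R_i$ from $R_{i+1}$ inside the polygon, but once $R_i$ is glued to $R_j$ the resulting piece is bordered by curves drawn from $\{\gamma_{i-1},\gamma_i,\gamma_{j-1},\gamma_j\}$ (or concatenations thereof), and there is no reason the block containing $R_{i+1}$ is among its neighbours. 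Your ``ordered list'' is therefore not a linear chain of adjacent blocks, and the interpretation of the tuple as a linear arrangement of handles---which is exactly what makes the reversal $\sigma_0$ and the end-swaps $\phi_1,\phi_{2g-1}$ the only positional ambiguities in Step~3---breaks down.

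The paper linearises the pieces differently. It groups the edges of $\P_F$ into vertex cycles $\mathcal{U}_k$ dictated by the side-pairing: writing the pairing as $a_{2m+1}^{-1}\sim a_{2(m+Q)}$ with $Q=qj$, the edges incident to a single identified vertex are $a_1,a_{2Q},a_{2Q+1},a_{4Q},a_{4Q+1},\ldots$, and the corresponding pieces
\[
R_1,\;R_{2Q},\;R_{2Q+1},\;R_{4Q},\;R_{4Q+1},\;\ldots
\]
are, by construction of the pairing, consecutively matched as $R_\bullet R_\bullet^{-1}$. It is this vertex-cycle order, not the polygon's slice order, that yields a genuine linear sequence of building blocks and hence a well-defined $2g$-tuple. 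The argument then splits into cases: $n_1,n_2\neq 2$ (with sub-cases $n_1$ even or odd, governing how the cycle for $\mathcal{U}_1$ closes up and how one passes to $\mathcal{U}_2$), versus one of $n_1,n_2$ equal to $2$, where $\P_F$ is a regular $4g$- or $(4g+2)$-gon with opposite side-pairing and the linear arrangement of the $R_i$ is explicit. Your proposal does not engage with this case distinction, and without it the count that forces exactly $2g$ slots and $g$ labels cannot be carried through. Your Step~3 is sound once a correct linearisation is in hand, but as written it rests on an ordering that the construction does not supply.
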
    

\begin{proof}
Let us denote these sub-regions by $\mathcal{U}_i$'s. Without loss of generality, one may assume that $a_{11}=a_{2n}$ and $a_{12}=a_1$, and write:
$$\mathcal{U}_1=\left(a_{2n}a_{1}\cdots a_{2Q}a_{2Q+1}  \cdots a_{2(n_1-1)Q}a_{2(n_1-1)Q+1}\right).$$ 
\noindent Thus, we can rearrange the edges corresponding to $\mathcal{U}_1$ in the following fashion:
$$\left(a_{1}a_{2Q}a_{2Q+1} \cdots a_{4Q}a_{4Q+1} \cdots a_{2(n_1-1)Q}a_{2(n_1-1)Q+1}a_{2n}\right).$$
\noindent Following this sequential arrangement of edges, we further rearrange and combine the canonical pieces $R_i$'s accordingly as follows:
$$\left(R_{1}R_{2Q}R_{2Q+1} R_{4Q}R_{4Q+1} R_{6Q} \cdots R_{2(n_1-1)Q}R_{2(n_1-1)Q+1}R_{2n} \right).$$
\noindent We note that there are $2n_1$ number of pieces including identifications such as $R_{2n}=R_1$, $R_{2(n_1-1)Q+1}=R_{2Q}$, etc. In fact, the first half of the pieces are the same as latter half. Now, the following two major cases can occur:
\begin{enumerate}[\textit{Case} 1.]
\item $n_1,n_2 \neq 2:$ Under this case, there are two sub-cases as described below:
\begin{enumerate}[\textit{Case 1}a.]
\item \textit{When $n_1$ is even.} In this case, the sequence of canonical pieces $R_i$'s corresponding to the sub-region $\mathcal{U}_1$ will look like:
$$\left(R_{1}R_{2Q}R_{2Q+1} R_{4Q}R_{4Q+1} R_{6Q} \cdots R_{2(\frac{n_1}{2}-1)Q}R_{n_1Q}\right ),$$ which can be equivalently rearranged as: 
$$\left(R_{1}R^{-1}_{1}R_{2Q+1} R_{2Q+1}^{-1}R_{4Q+1} R_{4Q+1}^{-1} \cdots R_{2(\frac{n_1}{2}-1)Q}R_{n_1Q} \right).$$
Each part of the form $R_{i}R^{-i}_{1}$ in the sequence above represents a cylinder, or a pair of pants, or a four-holed sphere. This provides a sequential arrangement of the building blocks of $\P_{F}$ associated to the sub-region $\mathcal{U}_1$.

Without loss of generality, let us assume that $a_{21}=a_{n_1Q+3}$, $a_{22}=a_{n_1Q+2}$ and then consider the sub-region $\mathcal{U}_2$. Likewise, there will be a tuple of building blocks corresponding to $\mathcal{U}_2$ as illustrated for $\mathcal{U}_1$. Also, the tuple corresponding to $\mathcal{U}_1$ is glued to the tuple corresponding to $\mathcal{U}_2$. Repeating this process for each $\mathcal{U}_i$ ($i=1,2,\cdots k$), and gluing them inductively, we finally get a sequence of building blocks for $\P_{F}$.

\item \textit{When $n_1$ is odd.} Then the sequence of pieces corresponding to the sub-region $\mathcal{U}_1$ will look like:
$$\left(R_{1}R_{2Q}R_{2Q+1} R_{4Q}R_{4Q+1} R_{6Q} \cdots R_{n_1Q}\right ),$$ which can be equivalently rearranged as 
$$\left(R_{1}R^{-1}_{1}R_{2Q+1} R_{2Q+1}^{-1}R_{4Q+1} R_{4Q+1}^{-1} \cdots R_{2(\frac{n_1}{2}-1)Q}R^{-1}_{2(\frac{n_1}{2}-1)Q}\right ).$$

\indent The preceding step leads to an annulus with $2g$ many boundary components, from which the surface $S_g$ is obtained by gluing a handle to the boundary components belonging to the same equivalence class imposed by the side pairing relations of $\P_{F}$. We observe that, here $g$ handles correspond to $g$ pairwise disjoint non-essential curves. We also get curves between each pair of adjacent boundaries, except for the first and the last boundary components. This provides a new set of curves, disjoint from the previously obtained $g$ curves. Thus we have $g+2g-3=3g-3$ disjoint curves which together represent a pants decomposition of $S_g$ which is also vertex of the pants graph. By construction, it is an admissible pants decomposition as well. Eventually, each of these canonical $2g$-tuple of integers represents a vertex of the pants graph. It might happen that two canonical $2g$-tuples represent the same vertex, and in such cases, it turns out that those tuples are equivalent.
\end{enumerate}

\item One of $n_1$ and $n_2$ is equal to $2$. 
In this case, from Theorem \ref{res:1}, it follows that there are two possibilities (see Theorem \ref{res:1}):

\begin{enumerate}[\textit{Case 2}a.]
\item $\P_F$ is a regular hyperbolic $4g$-gon with opposite side pairing, i.e. $a_i^{-1} \sim a_{2g+i}$, for all $1\leq i \leq 2g$.  It will have exactly one vertex after gluing. Let's denote it by $v$. In this case, there are $2g$ canonical pieces, namely, $R_1, \ldots,R_{2g}$ arranged in the following sequence (see Figure \ref{fig:poly_part}): 
$$\left(R_{g}R_{g+1}R_{g-1} R_{g+2}\cdots R_{2}R_{2g-1}R_1R_{2g}\right ).$$
\item  $\P_F$ is a regular hyperbolic $(4g+2)$-gon with opposite side pairing, i.e. $a_i^{-1} \sim a{2g+1+i}$, for all $1\leq i \leq 2g+1$.  There will be two distinct vertices after gluing, say, $v_1, v_2$. In this case, there are $2g+1$ canonical pieces, namely, $R_1, \ldots,R_{2g+1}$. Note that in this case, $R_{g+1}=R_{g+1}^{-1}$ and therefore, $R_{g+1}$ will form a cylinder. Thus the canonical pieces are arranged in the following sequence (as illustrated in Figure \ref{fig:poly_part}): 
$$\left(R_{g+1}R_{g}R_{g+2}R_{g-1}R_{g+3}\cdots R_{2}R_{2g}R_1R_{2g+1}\right ).$$
\end{enumerate}
\end{enumerate}
This completes the proof.
\end{proof}
\subsection{Estimating the distance in the Moduli space}
Let $F_1,F_2\in \Mod(S_g)$ be irreducible type-1 actions on $S_g$ whose respective unique fixed points in the moduli space are given by the hyperbolic polygons $\mathcal{P}_{F_1}$ and $\mathcal{P}_{F_2}$.  Also, let $[\mathcal{P}_{F_1}]$ and $[\mathcal{P}_{F_2}]$ denote the orbit spaces of $\mathcal{P}_{F_1}$ and $\mathcal{P}_{F_2}$ respectively, for the $\Mod(S_g)$-action on $\Teich(S_g)$. Let
$$\hat{d}_{WP}([\mathcal{P}_{F_1}],[\mathcal{P}_{F_2}]):=\min\{d_{WP}(\mathcal{X}_1,\mathcal{X}_2)| \mathcal{X}_1\in[\mathcal{P}_{F_1}], \mathcal{X}_2\in[\mathcal{P}_{F_2}] \}.$$
The following theorem provides an estimate of the Weil-Petersson distance $\hat{d}_{WP}([\mathcal{P}_{F_1}],[\mathcal{P}_{F_2}])$ using the result of Brock \cite{JB}. 
\begin{theorem}
Let $F_1$ and $F_2$ be two irreducible type-1 actions of $\Mod(S_g)$. Also, let $\mathcal{P}_{F_1}$ and $\mathcal{P}_{F_2}$ be the hyperbolic polygons as defined in Theorem \ref{res:1}. Suppose, $f_1$ and $f_2$ be the canonical tuples associated to $\mathcal{P}_{F_1}$ and $\mathcal{P}_{F_2}$ respectively. Then
$$\hat{d}_{WP}([\mathcal{P}_{F_1}],[\mathcal{P}_{F_2}])\leq K\mathbb{D}([f_1],[f_2])+\epsilon,$$
where $K$ and $\epsilon$ are the quasi-isometric constants from Brock's result.
\end{theorem}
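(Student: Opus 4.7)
The plan is to combine three ingredients: the admissible pants decompositions constructed in Theorem \ref{thm:admiss_pants_decomp}, the combinatorial encoding from Proposition \ref{pants algorithm}, and Brock's quasi-isometry between $(\Teich(S_g), \mu_{wp})$ and $\mathcal{P}(S_g)$.

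First I would invoke Theorem \ref{thm:admiss_pants_decomp} to fix admissible pants decompositions $P_1$ and $P_2$ for the hyperbolic structures $\mathcal{P}_{F_1}$ and $\mathcal{P}_{F_2}$, respectively. By construction every curve in $P_i$ has length bounded by the Bers' constant $\B_g$ when measured in $\mathcal{P}_{F_i}$, so $\mathcal{P}_{F_i} \in V(P_i)$ for $i=1,2$. By Proposition \ref{pants algorithm} these admissible decompositions are represented (up to equivalence) by canonical $2g$-tuples, which we can take to be $f_1$ and $f_2$.

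Next I would bound the pants graph distance $d_{\mathcal{P}(S_g)}(P_1,P_2)$ by $\mathbb{D}([f_1],[f_2])$. The key observation is that if $\tilde{f}\in[f_1]$ and $\tilde{g}\in[f_2]$ realize the minimum in the definition of $\mathbb{D}$, then $\sigma(\tilde{f},\tilde{g})$ can be written as a product of $d(\tilde{f},\tilde{g})=\mathbb{D}([f_1],[f_2])$ adjacent transpositions $\phi_i = (i\; i{+}1)$. Each such transposition rearranges exactly one pair of indices in the canonical tuple, which corresponds to replacing one curve of a building block by a curve meeting it minimally in the adjacent building block; this is precisely an elementary move in $\mathcal{P}(S_g)$. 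Thus a word of length $\mathbb{D}([f_1],[f_2])$ in the generators $\phi_i$ converting $\tilde{f}$ to $\tilde{g}$ produces an edge path of the same length between corresponding pants decompositions, proving
\[
d_{\mathcal{P}(S_g)}(P_1,P_2) \leq \mathbb{D}([f_1],[f_2]).
\]

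Finally I would apply Brock's theorem \cite{JB}: there exist universal constants $K,\epsilon$ such that
\[
d_{WP}(X,Y) \leq K\, d_{\mathcal{P}(S_g)}(P_X,P_Y) + \epsilon
\]
whenever $X\in V(P_X)$ and $Y\in V(P_Y)$. Applied to $X=\mathcal{P}_{F_1}$ and $Y=\mathcal{P}_{F_2}$ with $P_X=P_1$, $P_Y=P_2$, this yields
\[
d_{WP}(\mathcal{P}_{F_1},\mathcal{P}_{F_2}) \leq K\, \mathbb{D}([f_1],[f_2]) + \epsilon.
\]
Since $\hat{d}_{WP}([\mathcal{P}_{F_1}],[\mathcal{P}_{F_2}]) \leq d_{WP}(\mathcal{P}_{F_1},\mathcal{P}_{F_2})$ by definition of the minimum over orbit representatives, the required estimate follows.

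The main obstacle is the middle step: justifying that each adjacent transposition $\phi_i$ acting on a canonical $2g$-tuple induces a single elementary move on the corresponding pants decomposition. This requires verifying carefully that swapping two adjacent integers (representing adjacent boundary components of neighbouring building blocks produced by the algorithm of Proposition \ref{pants algorithm}) precisely amounts to replacing one simple closed curve in $P_1$ by another one intersecting it minimally, so that the canonical-tuple word length upper-bounds the pants graph distance rather than a multiple of it. The equivalence relation on canonical tuples is exactly what ensures that different but combinatorially equivalent encodings of the same pants decomposition do not inflate this count, which is why the orbit-minimum $\mathbb{D}([f_1],[f_2])$, rather than $d(f_1,f_2)$, is the right quantity.
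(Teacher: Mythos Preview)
Your proposal is correct and follows essentially the same approach as the paper: fix admissible pants decompositions via Theorem~\ref{thm:admiss_pants_decomp} and Proposition~\ref{pants algorithm}, bound the pants-graph distance by $\mathbb{D}([f_1],[f_2])$, apply Brock's quasi-isometry, and pass to the orbit minimum. In fact you supply more detail than the paper does on the key inequality $d_{\mathcal{P}}(P_{f_1},P_{f_2}) \leq \mathbb{D}([f_1],[f_2])$, which the paper simply asserts; your identification of the transposition-to-elementary-move correspondence as the main point to check is exactly right, and the paper leaves that verification implicit.
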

\begin{proof}
  Brock's result implies that, for any two points $\mathcal{X}_1,\mathcal{X}_2\in Teich(S_g)$, 
  $$d_{WP}(\mathcal{X}_1,\mathcal{X}_2)\leq Kd_{\mathcal{P}}(P_{\mathcal{X}_1},P_{\mathcal{X}_2})+\epsilon,$$ 
 where $P_{\mathcal{X}_1}$ and $P_{\mathcal{X}_2}$ denote two admissible pants decompositions corresponding to $\mathcal{X}_1$ and $\mathcal{X}_2$ respectively (Here $d_{\mathcal{P}}$ is the distance function of the \textit{Pants complex}). Let $P_{f_1}$ and $P_{f_2}$ denote the admissible pants decompositions corresponding to the \textit{canonical tuples} $f_1$ and $f_2$ respectively as constructed in Proposition \ref{pants algorithm}. Then it follows that $d_{\mathcal{P}}(P_{f_1},P_{f_2})\leq \mathbb{D}([f_1],[f_2])$. Hence by applying Brock's result, we have 
$$\hat{d}_{WP}([\mathcal{P}_{F_1}],[\mathcal{P}_{F_2}])\leq d_{WP}(\mathcal{X}_{F_1},\mathcal{X}_{F_2})\leq Kd_{\mathcal{P}}(P_{f_1},P_{f_2})+\epsilon \leq K\mathbb{D}([f_1],[f_2])+\epsilon$$  
\end{proof}
   
\bibliographystyle{plain} 
\bibliography{Article_KAS}
\end{document}